\newtheorem {theorem}    {Theorem}[section]
\newtheorem {lemma}      [theorem]    {Lemma}
\newtheorem {cor}  [theorem]    {Corollary}
\numberwithin{equation}{section}
\title{Residues and duality on semi-local two-dimensional adeles}
\author{Dongwen Liu}
\address{School of Mathematical Science, Zhejiang University, Hangzhou 310027, P.R. China}
\email{maliu@zju.edu.cn}
\subjclass[2010]{14H25; 11S70}
\thanks{This work was partially supported by NSFC 11201384.}
\begin{document}
\maketitle
%\section{}
%\subsection{}

\begin{abstract}
In this note, we establish a duality result under the residue paring between certain two-dimensional adelic spaces, which are associated to a closed point on an arithmetic surface.
\end{abstract}

\section{Introduction}

Higher dimensional adeles were introduced around 70s and 80s by Parshin \cite{P}
and Beilinson \cite{B}, who found among other things that for any quasi-coherent sheaf on a noetherian scheme, the cohomology of the associated adelic complex coincides with its sheaf cohomology. Their interesting idea and elegant construction of the adelic resolution provide an explicit adelic approach to study the geometry and arithmetic of higher dimensional schemes, algebraic or arithmetic. A  large variety of developments originated from this idea since then, although we will only mention a couple of typical examples below for illustration.  We apologize for not mentioning many other important related works on this subject, which we are not able to recall in this short note.

For instance, Yekutieli in the monograph \cite{Y} gives an adelic construction for the Grothendieck residue complex for algebraic varieties over a perfect field; Osipov \cite{O}
applies adelic constructions of relative residues and symbols to the Gysin map along a projective morphism from a smooth algebraic surface to a smooth algebraic curve over a perfect field; various Weil and Parshin reciprocity laws are established in \cite{OZ} via categorical method; the arithmetic case is considered by Morrow in \cite{M1, M2}, where the dualizing sheaves 
and Grothendieck's trace map for arithmetic surfaces are described explicitly using adeles; following Morrow's treatment we consider the multiplicative analog for arithmetic surfaces in \cite{L} and establish several reciprocities. This subject definitely deserves further investigations, and we would like to take this opportunity to mention that the generalization of adelic methods to higher dimensional arithmetic schemes will be carried out in a forthcoming project, including various important topics such as Grothendieck's residue complex, relative trace maps via local cohomology, multiplicative symbols, reciprocity laws, Gysin morphisms and so on.

Moreover, we currently have a renewed interest due to the work of Osipov and Parshin in \cite{OP1, OP2}, where a theory of harmonic analysis in dimension two is developed and leads to an analytic proof of the Riemann-Roch theorem for smooth projective algebraic surfaces over a finite field.

This paper grows out of a recent joint work \cite{LZ} with Y. Zhu, where we reformulate aforementioned Osipov-Parshin's theory in a slightly more canonical way, extend the notion of Weil index and as applications establish certain quadratic reciprocity laws on  arithmetic surfaces. For that purpose, a necessary piece of ingredient is the residue pairing between certain two-dimensional adelic spaces as well as some duality statements. Our main results, Theorem \ref{thm} and Corollary \ref{cor}, will be used in the analytic proof of quadratic reciprocities in \cite{LZ}.
 
 Let us briefly outline the main result as well as the structure of the paper. Let $X$ be an arithmetic surface (see Section 2 for details) over a Dedekind domain $\mathcal O_K$ of characteristic zero with finite residue fields. We consider a semi-local situation, namely we fix a closed point $x\in X$ lying over a finite place $s$ of $\mathcal O_K$, and take various formal curves $y$ containing $x$. Let $K(X)_x=\textrm{Frac~}\widehat{\mathcal O}_{X, x}$ and $K_s=\textrm {Frac }\widehat{\mathcal O}_{K,s}$. We define the adele ring ${\bf A}_{X,x}$ at $x$ and the adelic space of continuous relative differential forms $\Omega^{\rm cts}_{{\bf A}_{X,x}/K_s}$, between which there is a canonical residue pairing 
 \[
 {\bf A}_{X,x}\times \Omega^{\rm cts}_{{\bf A}_{X,x}/K_s}\to K_s, \quad (f, \omega)\mapsto \textrm{Res}_x(f\omega):=\sum_{y\subset^f X, y\ni x}\textrm{Res}_{x,y}(f_y\omega_y)
 \]
 where the last sum is taken over all formal curves $y\subset^f X$ containing $x$. Here and thereafter we write $y\subset^f  X$ to indicate that $y$ is only a formal curve, which may not be global. In \cite{M1, M2} it is proved that the residue pairing is trivial restricted on the rational points 
 $K(X)_x\times \Omega^{\rm cts}_{K(X)_x/K_s}$. Our main result states that,  
 $K(X)_x$ and $ \Omega^{\rm cts}_{K(X)_x/K_s}$ are in fact mutually the full annihilators under the residue paring at $x$. This suggests, for example, that the arithmetic quotient 
 ${\bf A}_{X,x}/K(X)_x$ can be thought of as the ``Pontryagin dual" of the space of rational differential forms $\Omega^{\rm cts}_{K(X)_x/K_s}$ in some suitable sense.

In Section 2 we recall the construction of residue maps in dimension two as well as the reciprocity from \cite{M1, M2}. In Section 3 we formulate the main result in its algebraic and geometric versions, and give a purely algebraic and elementary proof, which only makes use of the explicit structures of two-dimensional local fields as well as several standard facts from algebraic number theory. Our treatment is thus self-contained modulo these well-known results on local fields.

{\bf Acknowledgement.} The author is in debt to Y. Zhu for several discussions during their joint work \cite{LZ}.  He is grateful to the hospitality of HKUST, in a visit to which the work was started.

\section{Residues in dimension two}

In this section,  as preliminaries we briefly recall the construction of residue maps in dimension two from \cite{M1, M2}. A two-dimensional local field under our concern is a complete discrete valuation field $F$, whose residue field $\overline{F}$ is a complete field of discrete valuation with finite residue field. Let $F$ be such a field with characteristic zero, and $K\subset F$ be a local field contained in $F$ such that 
Frac$(K\cap {\mathcal O}_F)=K$. Let $k_{F,K}$ be the algebraic closure of $K$ in $F$, which is finite over $K$ hence a local field as well. We fix the subfield $K$ and write $k_F$ instead of $k_{F,K}$ for simplicity.

For a module $M$ over a local ring $A$ with maximal ideal $\frak{m}_A$, we write $M^{\rm sep}=M/\bigcap_{n\geq 0}\frak{m}_A^nM$ for the maximal separable quotient of $M$. 
Define the space of continuous relative differential forms $\Omega^{\rm cts}_{F/K}=\Omega^{\rm sep}_{\mathcal O_F/ K\cap \mathcal O_F}\otimes_{\mathcal O_F} F$.
If $F$ is of equal characteristic, then a choice of a local parameter $t$ leads to a unique $k_F$-isomorphism $F\cong k_F((t))$, and one has $K\subset \mathcal O_F$, $\Omega^{\rm sep}_{\mathcal O_F/K}=\mathcal O_F dt$. The relative residue map is defined by
\[
\textrm{Res}_{F/K}: \Omega^{\rm cts}_{F/K}\to K, \quad \sum_n a_n t^n dt\mapsto \textrm{Tr}_{k_F/K}(a_{-1}),
\]
which does not depend on the choice of $t$.

If $F$ is of mixed characteristic, then by Cohen's structure theory \cite{C} there is a two dimensional local field $L$ inside $F$ such that $F/L$ is finite, $\overline{F}=\overline{L}$, $k_F=k_L$ and $L$ is $k_L$-isomorphic to 
\[
k_L\{\{t\}\}:=\left\{\left.\sum_{n\in\mathbf{Z}} a_n t^n\right| a_n\in k_L \textrm{ is bounded and }\lim_{n\to-\infty}a_n=0\right\}.
\]
Such $L$ is called \textit{standard}.  For standard fields one has $K\cap \mathcal O_L=\mathcal O_K$ and $\Omega^{\rm sep}_{\mathcal O_L/\mathcal O_K}=\mathcal O_L dt$. The relative residue map is defined by
\[
\textrm{Res}_{L/K}: \Omega^{\rm cts}_{L/K}\to K, \quad \sum_n a_n t^n dt\mapsto -\textrm{Tr}_{k_L/K}(a_{-1}).
\]
Note the minus sign in the above definition. In general we define 
\[
\textrm{Res}_{F/K}=\textrm{Res}_{L/K}\circ \textrm{Tr}_{F/L}: \Omega^{\rm cts}_{F/K}\to K,
\]
which is independent of the choice of $L$ and the $k_L$-isomorphism $L\cong k_L\{\{t\}\}$.

It is proved in \cite{M1} that the residue maps defined above are functorial, i.e. they commute with trace maps, and in \cite{M2} they are shown to be continuous with respect to the discrete valuation topology. The reciprocity formulated below as well as its geometrization are established in \cite{M1, M2}. 

Let $A$ be a two-dimensional normal complete local ring of characteristic 
zero with finite residue field of characteristic $p$, and $E$ be the fractional field of $A$.
Let $K$ be a $p$-adic local field of characteristic zero,  such that its ring of integers ${\mathcal O}_K$ is contained in $A$.   We write  $\frak{P}\lhd^1 A$ for a height one prime ideal $\frak{P}$ of $A$, and let $E_{\frak{P}}$ be the fractional field of $\widehat{A_\frak{P}}$, the completion of $A_\frak{P}$ with respect to the discrete valuation induced by $\frak{P}A_\frak{P}$. Let us denote ${\Omega}^{\textrm{sep}}_{A/{\mathcal O}_K}\otimes_A E$ by $\Omega^{\textrm{cts}}_{E/K}$, which is a quotient of $\Omega_{E/K}$. For simplicity we write 
\[
\textrm{Res}_\frak{P}=\textrm{Res}_{E_{\frak{P}}/K}:  \Omega^{\rm cts}_{E_{\frak{P}}/K}\to K
\]
and by abuse of notation we also denote by $\textrm{Res}_\frak{P}$ its restriction to $\Omega^{\rm cts}_{E/K}$ under the natural embedding $\Omega^{\rm cts}_{E/K}\hookrightarrow \Omega^{\rm cts}_{E_\frak{P}/K}$, called the \textit{residue map at} $\frak{P}$. Then the reciprocity proved in \cite{M1} states that

\begin{theorem}\label{reci}
Let $\omega\in\Omega^{\rm{cts}}_{E/K}$. Then the residue ${\rm Res}_\frak{P}(\omega)=0$ for almost all height one primes $\frak{P}$ of $A$ and in $K$ one has
\[
{\rm Res}_K(\omega):=\sum_{\frak{P}\lhd^1 A}{\rm Res}_\frak{P}(\omega)=0.
\]
\end{theorem}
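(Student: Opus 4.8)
The plan is to prove the two assertions --- that $\mathrm{Res}_{\frak P}(\omega)=0$ for all but finitely many $\frak P$, and that $\sum_{\frak P}\mathrm{Res}_{\frak P}(\omega)=0$ --- by first disposing of finiteness directly, then reducing the sum formula to the case $A=\mathcal O_K[[t]]$ via functoriality, and finally reducing that case to the classical residue theorem on $\mathbb P^1$ over a local field. For finiteness, write $\omega=\eta/a$ with $\eta\in\Omega^{\mathrm{sep}}_{A/\mathcal O_K}$ and $a\in A\setminus\{0\}$. If $a\notin\frak P$ then $a$ is a unit in $A_{\frak P}$, so $\omega$ is the image of a \emph{regular} continuous differential at $\frak P$; the explicit formulas of Section~2, together with $\mathrm{Res}_{F/K}=\mathrm{Res}_{L/K}\circ\mathrm{Tr}_{F/L}$, show that $\mathrm{Res}_{F/K}$ kills regular differentials, so $\mathrm{Res}_{\frak P}(\omega)=0$. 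As only finitely many height one primes contain $a$, this gives the first claim.

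For the sum formula, by Cohen's structure theorem one may choose $t\in\frak m_A$ so that $A$ is a finite (hence injective) extension of $A_0:=\mathcal O_K[[t]]$; put $E_0=\mathrm{Frac}(A_0)$. The decomposition $E\otimes_{E_0}\widehat{(E_0)_{\frak p}}\cong\prod_{\frak P\mid\frak p}E_{\frak P}$ (valid since $A$ is normal and $E/E_0$ is separable), combined with functoriality of residues, gives $\sum_{\frak P\mid\frak p}\mathrm{Res}_{\frak P}(\omega)=\mathrm{Res}_{\frak p}(\mathrm{Tr}_{E/E_0}\omega)$ for every $\frak p\lhd^1 A_0$, and hence $\sum_{\frak P\lhd^1 A}\mathrm{Res}_{\frak P}(\omega)=\sum_{\frak p\lhd^1 A_0}\mathrm{Res}_{\frak p}(\mathrm{Tr}_{E/E_0}\omega)$. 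Since $\Omega^{\mathrm{cts}}_{E_0/K}=E_0\,dt$, it remains to show $\sum_{\frak p}\mathrm{Res}_{\frak p}(g\,dt)=0$ for $g\in E_0$. The height one primes of $\mathcal O_K[[t]]$ are the unique vertical one $\frak p_0=(\pi)$, with $E_{\frak p_0}\cong K\{\{t\}\}$ standard, $k_L=K$, and $\mathrm{Res}_{\frak p_0}(\sum a_nt^n\,dt)=-a_{-1}$, together with the horizontal ones $\frak q=(f)$, $f$ an irreducible distinguished polynomial, for which $E_{\frak q}\cong k_{E_{\frak q}}((t-\theta))$ with $\theta$ a root of $f$, $t-\theta$ a uniformizer, and $k_{E_{\frak q}}=K(\theta)$; applying the trace formula for residues over this constant subfield gives $\mathrm{Res}_{\frak q}(g\,dt)=\sum_{f(\theta)=0}\mathrm{Res}_{t=\theta}(g\,dt)$, the sum running over the roots of $f$ in $\overline K$, which all lie in $\frak m_{\overline K}$ by the Newton polygon, so that the expansion of $g$ at $\theta$ makes sense through $E_0\hookrightarrow\overline K((t-\theta))$.

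Next, $R:=\mathcal O_K[[t]][\pi^{-1}]$ is a one-dimensional UFD, hence a PID, whose maximal ideals are exactly the $(f)$, with $R/(f^j)\cong K[t]/(f^j)$; a partial fraction decomposition over $R$ writes $g=b_0+\sum_{f,j}c_{f,j}f^{-j}$ with $b_0\in R$ and $c_{f,j}\in K[t]$ of degree $<\deg f$. Any element of $R$ is a power series in $t$ with coefficients of bounded valuation, so it has no pole at any $\frak q$ and, inside $K\{\{t\}\}$, involves no negative power of $t$; hence $b_0$ contributes $0$ to every residue, and by linearity it suffices to treat $g=c/f^j$ with $c\in K[t]$, $\deg c<\deg f$. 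Such a $g$ has a pole only at the horizontal prime $(f)$, and it is a genuine rational function of $t$ over $K$.

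For such a $g$ the classical residue theorem on $\mathbb P^1_K$ gives $\sum_{f(\theta)=0}\mathrm{Res}_{t=\theta}(g\,dt)=-\mathrm{Res}_{t=\infty}(g\,dt)$, which equals the coefficient of $t^{-1}$ in the expansion of $g$ in $K((1/t))$; on the other hand $\mathrm{Res}_{\frak p_0}(g\,dt)$ is minus the coefficient of $t^{-1}$ in the expansion of $g$ in $K\{\{t\}\}$. The key point is that, $f$ being distinguished of degree $d$, the series $f^{-1}=t^{-d}(1+\cdots)^{-1}$ converges in both $K((1/t))$ and $K\{\{t\}\}$ to one and the same element --- the coefficient of $t^{-d-\ell}$ having valuation $\geq\lceil\ell/d\rceil\to\infty$ --- so the two coefficients of $t^{-1}$ agree, whence $\mathrm{Res}_{\frak p_0}(g\,dt)+\mathrm{Res}_{\frak q}(g\,dt)=0$; summing over the finitely many relevant $\frak q$ then completes the proof. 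I expect this reconciliation of the two expansions of $g$ at the ``vertical point at infinity'' to be the main obstacle: one has to pin down exactly why the distinguished hypothesis forces the $K((1/t))$-expansion of $1/f$ into $K\{\{t\}\}$ and makes it agree there with the intrinsic $K\{\{t\}\}$-expansion. The commutative algebra in the reduction step --- the product decomposition of the completed tensor product, and functoriality of residues for finite extensions of two-dimensional local fields --- is routine, the latter being granted by \cite{M1}.
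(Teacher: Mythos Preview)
The paper does not give its own proof of this statement: Theorem~\ref{reci} is quoted from \cite{M1} as background, and the paper's original contribution begins with Theorem~\ref{thm}. So there is no in-paper proof to compare against. That said, your argument is essentially a clean reconstruction of Morrow's strategy in \cite{M1}: reduce to $A_0=\mathcal O_K[[t]]$ by Cohen structure theory and functoriality of residues under trace, then handle the regular ring by an explicit partial-fraction computation that matches the mixed-characteristic residue at $(\pi)$ against the equal-characteristic residues at the distinguished primes via the expansion of $1/f$ in $K\{\{t\}\}$.

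Your proof is correct in outline and in the key details. A few small points worth tightening: (i) in the partial-fraction step the numerators $c_{f,j}$ should have $\deg c_{f,j}<\deg f$ for each $j$ separately (as you wrote), which does follow from $R/(f^j)\cong K[t]/(f^j)$ together with the usual refinement; (ii) the phrase ``$g$ has a pole only at the horizontal prime $(f)$'' is slightly misleading, since $g$ may well lie outside $\mathcal O_{K\{\{t\}\}}$ --- what you use, and what is true, is that $g$ has no pole at any \emph{horizontal} prime other than $(f)$, while $\mathrm{Res}_{\frak p_0}(g\,dt)$ is computed directly from the $K\{\{t\}\}$-expansion; (iii) the identification $\mathrm{Res}_{\frak q}(g\,dt)=\sum_{f(\theta)=0}\mathrm{Res}_{t=\theta}(g\,dt)$ is exactly the equal-characteristic formula $\mathrm{Res}_{E_{\frak q}/K}=\mathrm{Tr}_{k_{\frak q}/K}\circ\mathrm{res}$ from Section~2, so you might cite that rather than invoking an unspecified ``trace formula for residues.'' The reconciliation you flag as the main obstacle --- that the geometric-series expansion of $1/f$ in $K((1/t))$ actually lands in $K\{\{t\}\}$ and agrees there with the intrinsic inverse --- is handled correctly by your valuation estimate $v(\text{coeff of }t^{-d-\ell})\geq\lceil\ell/d\rceil$, which is precisely where the distinguished hypothesis on $f$ enters.
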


As outlined in the Introduction, one has geometric interpretation for the above reciprocity in terms of arithmetic surfaces.  Let us briefly recall the formulation and refer the reader to \cite{M2} for more details. See also \cite{L} for similar discussions in the context of multiplicative symbols.

Let $\mathcal O_K$ be a Dedekind domain of characteristic zero with finite residue fields. Let $X$ be a two-dimensional, normal scheme, flat and projective over $S=\textrm{Spec }\mathcal O_K$, whose generic fibre is one-dimensional and irreducible. Let $x$ be a codimension two closed point on $X$ lying over a closed point $s\in S$. Then the complete local ring $A:=\widehat{\mathcal O}_{X,x}$ satisfies the properties assumed in the above, and contains the discrete valuation ring $\widehat{\mathcal O}_{K,s}$. To each formal curve $y$ containing $x$, i.e. a height one prime ideal $y$ of $A$, one may associate a two-dimensional local field $K_{x,y}$ and define the local residue map
\[
\textrm{Res}_{x,y}=\textrm{Res}_{K_{x,y}/K_s}: \Omega^{\rm cts}_{K_{x,y}/K_s}\to K_s.
\]
Let $K(X)_x=\textrm{Frac }A\hookrightarrow K_{x,y}$ and $K_s=\textrm{Frac }\widehat{\mathcal O}_{K,s}$. Then the reciprocity above can be reformulated in a  geometric translation as 

\begin{theorem} \label{geo}
Let $\omega\in \Omega^{\rm{cts}}_{K(X)_x/K_s}$, and $x\in X$ be a closed point lying over $s\in S$. Then ${\rm Res}_{x,y}(\omega)=0$ for almost all formal curves $y\subset ^f X$ containing $x$ and
\[
{\rm Res}_x(\omega):=\sum_{y\subset^f X, y\ni x}{\rm Res}_{x,y}(\omega)=0.
\]
\end{theorem}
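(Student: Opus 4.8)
The plan is to deduce Theorem~\ref{geo} directly from the algebraic reciprocity Theorem~\ref{reci}, by recognizing that the geometric data attached to the closed point $x\in X$ is exactly an instance of the abstract setup of the previous paragraphs. So the first step is to verify that $A:=\widehat{\mathcal O}_{X,x}$ together with $K:=K_s=\mathrm{Frac}\,\widehat{\mathcal O}_{K,s}$ satisfies all hypotheses of Theorem~\ref{reci}. Since $x$ has codimension two, $A$ is a two-dimensional local ring; since $X$ is of finite type over the Dedekind domain $\mathcal O_K$, the local ring $\mathcal O_{X,x}$ is excellent and normal, hence analytically irreducible and with normal completion, so $A$ is a normal complete local domain of characteristic zero. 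The residue field of $A$ agrees with that of $\mathcal O_{X,x}$, which is finite of some characteristic $p$ because $x$ lies over the closed point $s$, whose residue field is finite, and the fibre over $s$ is of finite type; consequently $K_s$ is a $p$-adic local field of characteristic zero. Finally, the structure morphism $X\to S$ gives $\widehat{\mathcal O}_{K,s}\subset A$, as already noted in the paragraph preceding Theorem~\ref{geo}. (Note that projectivity of $X/S$ is irrelevant here; it enters only in the global reciprocity on $X$, not in the local statement at a single point.)

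The second step is to match the local objects. By definition a formal curve $y\subset^f X$ through $x$ is precisely a height one prime $\mathfrak P=y\lhd^1 A$; for such $\mathfrak P$ the two-dimensional local field $K_{x,y}$ is, by its very construction in \cite{M2}, the fraction field $E_{\mathfrak P}$ of $\widehat{A_{\mathfrak P}}$, and under this identification the local residue map $\mathrm{Res}_{x,y}=\mathrm{Res}_{K_{x,y}/K_s}$ is exactly $\mathrm{Res}_{\mathfrak P}=\mathrm{Res}_{E_{\mathfrak P}/K_s}$. Moreover $K(X)_x=\mathrm{Frac}\,A=E$ and, unwinding the definitions recalled in Section~2, $\Omega^{\mathrm{cts}}_{K(X)_x/K_s}=\Omega^{\mathrm{sep}}_{A/\widehat{\mathcal O}_{K,s}}\otimes_A E=\Omega^{\mathrm{cts}}_{E/K_s}$, with the embedding $\Omega^{\mathrm{cts}}_{K(X)_x/K_s}\hookrightarrow\Omega^{\mathrm{cts}}_{K_{x,y}/K_s}$ corresponding to $\Omega^{\mathrm{cts}}_{E/K_s}\hookrightarrow\Omega^{\mathrm{cts}}_{E_{\mathfrak P}/K_s}$.

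With these identifications the conclusion is immediate: given $\omega\in\Omega^{\mathrm{cts}}_{K(X)_x/K_s}$, regard it as an element of $\Omega^{\mathrm{cts}}_{E/K_s}$ and apply Theorem~\ref{reci} to $A$; this yields $\mathrm{Res}_{\mathfrak P}(\omega)=0$ for all but finitely many $\mathfrak P\lhd^1 A$, that is $\mathrm{Res}_{x,y}(\omega)=0$ for almost all formal curves $y\ni x$, together with $\sum_{\mathfrak P\lhd^1 A}\mathrm{Res}_{\mathfrak P}(\omega)=0$, that is $\mathrm{Res}_x(\omega)=\sum_{y\ni x}\mathrm{Res}_{x,y}(\omega)=0$ in $K_s$. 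The only genuine work, and the single point where care is needed, lies in the identifications of the second step -- in particular in confirming that the two-dimensional local field $K_{x,y}$ and residue map $\mathrm{Res}_{x,y}$ attached to the geometric pair $(x,y)$ coincide with $(E_{\mathfrak P},\mathrm{Res}_{\mathfrak P})$ for $\mathfrak P=y$ -- but this is precisely how those objects are defined in \cite{M1, M2}, so I expect no obstacle beyond keeping the bookkeeping straight; the arithmetic content is carried entirely by Theorem~\ref{reci}.
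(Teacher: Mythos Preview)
Your proposal is correct and is precisely the approach the paper takes: the paper presents Theorem~\ref{geo} explicitly as ``a geometric translation'' of Theorem~\ref{reci}, relying on the identifications $A=\widehat{\mathcal O}_{X,x}$, $K=K_s$, formal curves $y\ni x$ $\leftrightarrow$ height one primes $\mathfrak P\lhd^1 A$, and $\mathrm{Res}_{x,y}=\mathrm{Res}_{\mathfrak P}$, without writing out a separate proof. Your write-up simply makes these identifications and the verification of the hypotheses on $A$ explicit, which is exactly the intended argument.
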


The above theorem also has a variant form where the sum is over global curves. Let $Y\subset X$ be an irreducible curve passing through $x$, which in general may have several formal branches $y_1, \ldots, y_n$ in Spec $A$. The curve $Y$ defines a prime ideal of $\mathcal{O}_{X,x}$, which we again denote by $Y$ without causing any confusion. Then we may form a finite product of two-dimensional local fields 
\[
K_{x,Y}:=\prod_{y\lhd^1 A, y|Y } K_{x,y}.
\]
Define the residue map
\[
\textrm{Res}_{x,Y}=\sum_{y\lhd^1 A, y|Y}\textrm{Res}_{K_{x,y}/K_s}: \Omega^{\rm cts}_{K_{x,Y}/K_s}\to K_s.
\]
Then one can easily formulate an analog of Theorem \ref{geo} by taking $\omega\in
\Omega_{K(X)/K}$ and the sum of residues Res$_{x,Y}(\omega)$ over all global curves $Y\subset X$ containing $x$.

\section{Main result}
We adopt the notations and formulations in the last section. Again let $A$ be a two-dimensional local ring with properties specified as before and let $E$ be its fractional field. Define the adele ring ${\bf A}_E$ of $E$ to be the restricted product $\prod'_{\frak{P}\lhd^1 A}E_\frak{P}$ with respect to $\widehat{A_\frak{P}}$'s, and let $\Omega^{\rm cts}_{{\bf A}_E/K}$ be the restricted product
$\prod'_{\frak{P}\lhd^1 A} \Omega^{\textrm{cts}}_{E_\frak{P}/K}$ with respect to $\Omega^{\textrm{sep}}_{{\mathcal O}_{E_\frak{P}}/K\cap {\mathcal O}_{E_\frak{P}}}$'s.  Then we have the pairing
\[
{\bf A}_E\times \Omega^{\rm cts}_{{\bf A}_E/K}\to K,\quad (f, \omega)\mapsto \textrm{Res}_{E/K}(f\omega):=\sum_{\frak{P}\lhd^1 A} \textrm{Res}_\frak{P} (f_\frak{P}\omega_\frak{P}),
\]
which is actually a finite sum.  Notice that we have diagonal embeddings $E\hookrightarrow {\bf A}_E$ and $\Omega^{\textrm{cts}}_{E/K}\hookrightarrow \Omega^{\rm cts}_{{\bf A}_E/K}$. Then by Theorem \ref{reci}, the residue paring is trivial restricted to the space of rational points $E\times \Omega^{\rm cts}_{E/K}$.

Our purpose is to prove the following result, which identifies the arithmetic quotients ${\bf A}_E/E$ and $\Omega^{\rm cts}_{{\bf A}_E/K}/ \Omega^{\rm cts}_{E/K}$ as ``Pontryagin duals" of $\Omega^{\rm cts}_{E/K}$ and $E$ respectively in some appropriate sense.

\begin{theorem}\label{thm}
$E$ and $\Omega^{\mathrm{cts}}_{E/K}$ are mutual annihilators under the residue pairing.
\end{theorem}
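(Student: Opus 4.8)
The plan is to prove the two containments separately. By Theorem~\ref{reci} we already know $E\subseteq (\Omega^{\rm cts}_{E/K})^\perp$ and $\Omega^{\rm cts}_{E/K}\subseteq E^\perp$, so the content is the reverse inclusions: if $f\in{\bf A}_E$ pairs to zero with every rational differential form, then $f\in E$; and dually, if $\omega\in\Omega^{\rm cts}_{{\bf A}_E/K}$ pairs to zero with every rational function, then $\omega\in\Omega^{\rm cts}_{E/K}$. I expect the two arguments to be essentially parallel, exploiting the explicit structure of the two-dimensional local fields $E_\frak{P}$ and the fact that at each height one prime the local residue pairing $E_\frak{P}\times\Omega^{\rm cts}_{E_\frak{P}/K}\to K$ is a perfect (topological) pairing, which follows from the continuity in \cite{M2} together with the explicit formula for $\textrm{Res}_{F/K}$ recalled above (the coefficient $a_{-1}$ read off against the trace form on $k_F$).

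First I would reduce to a single height one prime. Given $f=(f_\frak{P})$ annihilating all of $\Omega^{\rm cts}_{E/K}$, the idea is to test against rational forms of the type $g\,\eta$ where $g\in E$ is chosen to have prescribed behaviour at finitely many primes (strong approximation / the Chinese remainder theorem in the Dedekind-domain-like ring $A$, localized appropriately) and $\eta$ is a fixed global form; combined with Theorem~\ref{reci} this should let me isolate, for each fixed $\frak{P}_0$, the condition that $f_{\frak{P}_0}$ annihilates the image of $\Omega^{\rm cts}_{E/K}$ in $\Omega^{\rm cts}_{E_{\frak{P}_0}/K}$ under local residue at $\frak{P}_0$. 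The crux is then a local-to-global step: the image of $E$ in $E_\frak{P}$ (resp.\ of $\Omega^{\rm cts}_{E/K}$ in $\Omega^{\rm cts}_{E_\frak{P}/K}$) is dense, so by continuity of $\textrm{Res}_\frak{P}$ the local annihilator of this image coincides with its closure, and perfectness of the local pairing forces $f_{\frak{P}_0}$ into the appropriate local rational/integral piece. Running this at every $\frak{P}$ simultaneously, together with the restricted-product constraint (for almost all $\frak{P}$, $f_\frak{P}\in\widehat{A_\frak{P}}$), should pin $f$ down to an element of $E$ via an adelic patching argument analogous to the one-dimensional statement ${\bf A}_k/k$ is the Pontryagin dual of $k$.

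The main obstacle, I expect, is the mixed-characteristic primes and the behaviour of the archimedean-free Dedekind base: the local fields $E_\frak{P}$ can be of mixed characteristic, where $\textrm{Res}_{F/K}$ is defined via descent to a standard subfield $L\cong k_L\{\{t\}\}$ and a trace, so "reading off $a_{-1}$" is less transparent and the perfectness of the local pairing needs the functoriality of residues under $\textrm{Tr}_{F/L}$ from \cite{M1} plus an honest analysis of the self-duality of $k_L\{\{t\}\}$ under its residue pairing (the topology on $k_L\{\{t\}\}$ is subtle, and one must check that the continuous dual is exactly $\Omega^{\rm cts}$ and not something larger). A secondary difficulty is making the approximation step precise: one needs that $E$ is dense in each $E_\frak{P}$ and, more delicately, that one can approximate in finitely many $E_\frak{P}$'s at once while controlling integrality elsewhere — this is where standard facts from algebraic number theory (behaviour of height one primes of $A=\widehat{\mathcal O}_{X,x}$, finiteness of the relevant ideal class data, the structure of $A$ as an excellent two-dimensional normal local ring) enter, exactly as the Introduction promises.
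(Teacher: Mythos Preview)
Your isolation step is where the argument breaks. You claim that by testing against forms $g\eta$ with $g\in E$ chosen by approximation you can arrange, for each fixed $\frak{P}_0$, that $f_{\frak{P}_0}$ annihilates the image of $\Omega^{\rm cts}_{E/K}$ inside $\Omega^{\rm cts}_{E_{\frak{P}_0}/K}$. But that image is dense (as you note), the local residue is continuous, and the local pairing is non-degenerate; so this would force $f_{\frak{P}_0}=0$ for every $\frak{P}_0$, hence $f=0$. Since the annihilator of $\Omega^{\rm cts}_{E/K}$ already contains the nonzero diagonal $E$, the isolation must fail. Concretely: you cannot use approximation to kill the residues at all $\frak{P}\neq\frak{P}_0$ while leaving the $\frak{P}_0$-component of the rational test form unconstrained, because a rational form with all residues zero away from $\frak{P}_0$ has zero residue at $\frak{P}_0$ as well (Theorem~\ref{reci}). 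The statement ``$f\in E$'' is a global compatibility among the $f_\frak{P}$'s, not a local condition on each one, and your scheme never produces such a compatibility.

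The paper proceeds quite differently. It first passes, via Cohen's structure theorem, to a regular subring $B\cong\mathcal{O}_K[[t]]$ with fraction field $F$; the reduction from $A$ to $B$ goes through the trace $\mathrm{Tr}_{E/F}$ together with a separate lemma (if $\mathrm{Tr}_{E/F}(fg)\in F$ for all $g\in E$ then $f\in E$) proved by elementary Galois and discriminant arguments plus approximation for valuations. In the regular case the height-one primes are $\frak{p}_0=(\pi_K)$ and the $\frak{p}=(P)$ for irreducible distinguished $P\in\mathcal{O}_K[t]$, and one tests against the explicit rational functions $P^{-i-1}t^n$. These are integral away from $\frak{p}$ and $\frak{p}_0$, so only two residues survive; the computation shows not that $\omega_\frak{p}$ vanishes but that each coefficient of $\omega_\frak{p}$ is \emph{determined by} $\omega_{\frak{p}_0}$. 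A separate calculation with the test functions $t^{n}$ shows $\omega_{\frak{p}_0}\in\Omega^{\rm cts}_{F/K}$; since any rational $\omega'$ with $\omega'_{\frak{p}_0}=\omega_{\frak{p}_0}$ also annihilates $F$, the determination step gives $\omega=\omega'$. This two-primes-at-a-time coupling, pivoting on the distinguished mixed-characteristic prime $\frak{p}_0$, is what replaces your attempted one-prime localization.
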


Let us rephrase the theorem in a geometric way before giving its proof. We again follow the setup and notations in the last section. Consider a formal curve $y\subset^f X$ containing $x$. Write $\mathcal O_{x,y}=\mathcal O_{K_{x,y}}$. Define the adelic spaces
\[
{\bf A}_{X,x}={\prod}'_{y\subset^f X, y\ni x}K_{x,y}, 
\]
the restricted product with respect to $\mathcal O_{x,y}$'s, and
\[
\Omega^{\rm cts}_{{\bf A}_{X,x}/K_s}={\prod}'_{y\subset^f X, y\ni x} \Omega^{\rm cts}_{K_{x,y}/K_s},
\]
the restricted product with respect to $\Omega^{\rm sep}_{\mathcal O_{x,y}/K_s\cap \mathcal O_{x,y}}$'s. Gluing all these pieces of constructions we obtain the residue paring at $x$,
\[
{\bf A}_{X,x}\times \Omega^{\rm cts}_{{\bf A}_{X,x}/K_s}\to K_s, \quad (f, \omega)\mapsto \textrm{Res}_x(f\omega):=\sum_{y\subset^f X, y\ni x}\textrm{Res}_{x,y}(f_y\omega_y),
\]
where $\textrm{Res}_{x,y}$ is as defined in the last section. Consider the diagonal embeddings $K(X)_x\hookrightarrow {\bf A}_{X,x}$ and $\Omega^{\rm cts}_{K(X)_x/K_s}\hookrightarrow\Omega^{\rm cts}_{{\bf A}_{X,x}/K_s}$. Theorem \ref{geo} states that the residue pairing is trivial restricted on $K(X)_x\times \Omega^{\rm cts}_{K(X)_x/K_s}$. Now we have the following geometric reformulation of Theorem \ref{thm}.

\begin{cor}\label{cor}
$K(X)_x$ and $\Omega^{\rm cts}_{K(X)_x/K_s}$ are mutual annihilators under the residue  pairing at $x$.
\end{cor}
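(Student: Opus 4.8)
The plan is to deduce the corollary from Theorem \ref{thm} by unwinding the geometric dictionary recalled in Section 2, so that no new argument is needed: the corollary is, as the text preceding it says, a reformulation of the theorem, and the task is to make the translation precise. First I would fix the correspondence $(A, E, K) = (\widehat{\mathcal O}_{X,x},\, K(X)_x,\, K_s)$, where $K_s = \textrm{Frac}\,\widehat{\mathcal O}_{K,s}$, and check that it meets the standing hypotheses of Theorem \ref{thm}. This is exactly what is recorded just before Theorem \ref{geo}: $A = \widehat{\mathcal O}_{X,x}$ is a two-dimensional normal complete local ring of characteristic zero with finite residue field of characteristic $p$, $K_s$ is a $p$-adic local field of characteristic zero, and $\widehat{\mathcal O}_{K,s}\subset A$. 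Hence $(A, E, K_s)$ falls within the scope of Theorem \ref{reci}, and therefore of Theorem \ref{thm}.

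Next I would match the local data one formal curve at a time. The formal curves $y\subset^f X$ through $x$ are by definition the height one primes $\frak P\lhd^1 A$, and for $y = \frak P$ the two-dimensional local field $K_{x,y}$ attached in Section 2 is the field $E_{\frak P} = \textrm{Frac}\,\widehat{A_{\frak P}}$ of Section 3, with ring of integers $\mathcal O_{x,y} = \mathcal O_{K_{x,y}} = \widehat{A_{\frak P}}$; under this identification $\Omega^{\rm cts}_{K_{x,y}/K_s} = \Omega^{\rm cts}_{E_{\frak P}/K_s}$, $\Omega^{\rm sep}_{\mathcal O_{x,y}/K_s\cap\mathcal O_{x,y}} = \Omega^{\rm sep}_{\mathcal O_{E_{\frak P}}/K_s\cap\mathcal O_{E_{\frak P}}}$, and the local residue map $\textrm{Res}_{x,y} = \textrm{Res}_{K_{x,y}/K_s}$ is literally $\textrm{Res}_{\frak P} = \textrm{Res}_{E_{\frak P}/K_s}$. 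Forming restricted products over these two identical index sets then identifies ${\bf A}_{X,x}$ with ${\bf A}_E$ and $\Omega^{\rm cts}_{{\bf A}_{X,x}/K_s}$ with $\Omega^{\rm cts}_{{\bf A}_E/K_s}$; because both pairings are, by definition, the same finite sums of the same local residue maps, the residue pairing at $x$ goes over to the pairing $(f,\omega)\mapsto\textrm{Res}_{E/K_s}(f\omega)$ of Section 3, and the diagonal embeddings of $K(X)_x = E$ into ${\bf A}_{X,x}$ and of $\Omega^{\rm cts}_{K(X)_x/K_s} = \Omega^{\rm cts}_{E/K_s}$ into $\Omega^{\rm cts}_{{\bf A}_{X,x}/K_s}$ go over to the diagonal embeddings there.

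With the dictionary in place, the final step is immediate: the assertion of Theorem \ref{thm} that $E$ and $\Omega^{\rm cts}_{E/K_s}$ are mutual annihilators under $\textrm{Res}_{E/K_s}$ reads, after translation, as the statement that $K(X)_x$ and $\Omega^{\rm cts}_{K(X)_x/K_s}$ are mutual annihilators under the residue pairing at $x$ -- which is the corollary. I do not expect a genuine obstacle here, since all of the mathematical substance is contained in Theorem \ref{thm}; the only points that call for care, and they are routine, are verifying that the scheme-theoretic constructions of Section 2 (the fields $K_{x,y}$, their integers $\mathcal O_{x,y}$, and the maps $\textrm{Res}_{x,y}$) agree with the purely ring-theoretic constructions attached to $A$ in Section 3, and that the indexing set of formal curves through $x$ is precisely $\{\frak P\lhd^1 A\}$ -- both of which are built into the setup recalled in Section 2 following \cite{M2}.
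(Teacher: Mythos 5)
Your proposal is correct and matches the paper's treatment: the paper presents the corollary as nothing more than the geometric reformulation of Theorem \ref{thm}, obtained by the same dictionary $(A,E,K)=(\widehat{\mathcal O}_{X,x}, K(X)_x, K_s)$ with formal curves through $x$ corresponding to height one primes of $A$, so that the adelic spaces, diagonal embeddings and residue pairings at $x$ coincide with those of Section 3. Your careful unwinding of that translation is exactly the intended (and only needed) argument.
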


We shall reduce the proof of Theorem \ref{thm} to the case of a two-dimensional regular local ring. In fact by Cohen's structure theorem \cite{C}, $A$ has a subring $B$ which contains ${\mathcal O}_K$ and is ${\mathcal O}_K$-isomorphic to ${\mathcal O}_K[[t]]$. Then $B$ is a two-dimensional complete regular local ring with maximal ideal $\mathfrak{m}_B=\langle \pi_K, t\rangle$, where $\pi_K$ is a local parameter of $K$. Recall that by Weierstrass's preparation theorem (cf. \cite{W}), a height one prime of $B$ is generated by either $\pi_K$ or an irreducible, distinguished polynomial (i.e. of the form $t^l+a_1 t^{l-1}+\cdots+ a_l$ with $a_i\in \frak{m}_K$). Let $F$ be the fractional field of $B$. We first prove that

\begin{lemma}
Theorem \ref{thm} holds for $B$, i.e. $F$ and $\Omega^{\mathrm{cts}}_{F/K}$ are mutual annihilators under the residue pairing
\[
{\bf A}_F\times \Omega^{\rm cts}_{{\bf A}_F/K}\to K,\quad (f, \omega)\mapsto \mathrm{Res}_{F/K}(f\omega):=\sum_{\frak{p}\lhd^1 B} \mathrm{Res}_\frak{p} (f_\frak{p}\omega_\frak{p}).
\]
\end{lemma}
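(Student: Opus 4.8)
The plan is to establish the two inclusions separately. First, by Theorem~\ref{reci} applied to $B$, the residue pairing already vanishes on $F\times\Omega^{\rm cts}_{F/K}$, so each of $F$ and $\Omega^{\rm cts}_{F/K}$ is contained in the annihilator of the other; it remains to prove the reverse containments. By the symmetry of the pairing it suffices to treat one direction carefully and then indicate how the argument dualizes, but in practice the two directions are genuinely different: showing that an adele $f\in{\bf A}_F$ annihilating all of $\Omega^{\rm cts}_{F/K}$ must lie in $F$, and showing that a differential $\omega\in\Omega^{\rm cts}_{{\bf A}_F/K}$ annihilating all of $F$ must lie in $\Omega^{\rm cts}_{F/K}$.

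For the first direction I would use the explicit Weierstrass description of the height one primes of $B\cong\mathcal O_K[[t]]$: they are $\langle\pi_K\rangle$ together with the primes $\langle P\rangle$ for $P$ an irreducible distinguished polynomial. Fix $f=(f_\frak{p})\in{\bf A}_F$ annihilating $\Omega^{\rm cts}_{F/K}$. The strategy is to subtract off a rational function to reduce the ``polar part'' of $f$ one prime at a time. Concretely, at each $\frak{p}$ the completion $F_\frak{p}$ is a two-dimensional local field with a well-understood structure (equal characteristic for $\frak{p}=\langle P\rangle$, mixed characteristic for $\frak{p}=\langle\pi_K\rangle$), and the local residue pairing $F_\frak{p}\times\Omega^{\rm cts}_{F_\frak{p}/K}\to K$ is a perfect duality in the appropriate topological sense — this is essentially the classical two-dimensional local duality, and pairing $f_\frak{p}$ against forms of the type $g\,\frac{dt}{P^n}$ (or $g\,\frac{dt}{\pi_K^n}$) with $g$ a polynomial lets one read off, via the trace form on the residue field, the principal part of $f_\frak{p}$ modulo $\widehat{B_\frak{p}}$. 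Because $f$ is an adele, $f_\frak{p}\in\widehat{B_\frak{p}}$ for all but finitely many $\frak{p}$; a partial-fractions argument over the PID $B[1/\pi_K]$ on the generic fibre, together with the analogous statement at $\langle\pi_K\rangle$, produces a single $h\in F$ whose local expansions match the principal parts of $f$ at every $\frak{p}$. Then $f-h$ lies in $\widehat{B_\frak{p}}$ for every $\frak{p}$ and still annihilates $\Omega^{\rm cts}_{F/K}$; pairing against forms $b\,dt$ with $b\in B$ and invoking the fact that $\bigcap_\frak{p}\widehat{B_\frak{p}}\cap(\text{annihilator of }B\,dt)=\{0\}$ inside the relevant completion forces $f-h=0$, so $f=h\in F$.

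For the second direction the argument is formally dual: given $\omega\in\Omega^{\rm cts}_{{\bf A}_F/K}$ annihilating $F$, write $\omega_\frak{p}=g_\frak{p}\,dt$, use the local perfectness of the residue pairing to control the principal parts of the $g_\frak{p}$ (now pairing against polynomials and against $1/P^n$, $1/\pi_K^n$ on the function-field side), construct a rational differential $\eta\in\Omega^{\rm cts}_{F/K}$ matching those principal parts via partial fractions, and show $\omega-\eta$ is everywhere integral and annihilates all of $F$, hence vanishes. Throughout, the trace maps $\mathrm{Tr}_{k_F/K}$ entering the definition of $\mathrm{Res}$ are nondegenerate (separability of residue extensions in characteristic zero), which is what makes the local pairings perfect; I would isolate this as the one nontrivial input from the structure theory of two-dimensional local fields.

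I expect the main obstacle to be the global matching step: passing from the purely local statement ``$f_\frak{p}$ is determined modulo $\widehat{B_\frak{p}}$ by its pairing against local forms'' to the existence of a \emph{single} global rational $h\in F$ realizing all these principal parts simultaneously, while keeping careful track of the two special behaviors at $\frak{p}=\langle\pi_K\rangle$ (mixed characteristic, and the minus sign in $\mathrm{Res}_{L/K}$) versus $\frak{p}=\langle P\rangle$ (equal characteristic, residue field a finite extension of $\mathbf{F}_q((\,\cdot\,))$). This is where the hypothesis that $B$ is \emph{regular} — so that $B[1/\pi_K]\cong K[t]$ is a PID and ordinary partial fractions apply — is used decisively, and it is also the reason the general case of Theorem~\ref{thm} must be reduced to this Lemma rather than attacked directly.
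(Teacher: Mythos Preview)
Your partial-fractions/principal-parts strategy is a plausible route and could be made rigorous, but it differs substantially from the paper's argument, and one detail is wrong: $B[1/\pi_K]$ is not $K[t]$ but rather $\mathcal{O}_K[[t]][1/\pi_K]$, power series with bounded denominators --- though it \emph{is} a one-dimensional UFD hence a PID, which is what your argument actually needs. The paper avoids altogether the global matching step you flag as the main obstacle. Working on the $\omega$ side, it first rescales by an element of $F^\times$ so that $\omega_\frak{p}$ is integral for every $\frak{p}\neq\frak{p}_0:=\langle\pi_K\rangle$; then, rather than constructing a rational form with prescribed principal parts, it proves a \emph{uniqueness} statement: testing against the functions $P^{-i-1}t^n$ (poles only at $\frak{p}=\langle P\rangle$ and at $\frak{p}_0$) and using nondegeneracy of $\mathrm{Tr}_{k_\frak{p}/K}$ pins down every Taylor coefficient of $\omega_\frak{p}$ in terms of $\omega_{\frak{p}_0}$ alone. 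Finally, testing against the monomials $t^{i-1}$ (poles only at $\frak{p}_0$ and $\langle t\rangle$) kills the negative-index coefficients of $\omega_{\frak{p}_0}$, so $\omega_{\frak{p}_0}$ already lies in $\Omega^{\mathrm{cts}}_{F/K}$; its diagonal image and $\omega$ then agree at $\frak{p}_0$ and both annihilate $F$, hence coincide by the uniqueness just established. What the paper's approach buys is that no CRT or partial-fraction construction is ever needed --- the single mixed-characteristic prime $\frak{p}_0$ controls all other components, and rationality at $\frak{p}_0$ comes for free from the structure of $K\{\{t\}\}$. What your approach would buy, if carried out, is a closer analogy with the classical one-dimensional adele duality, at the cost of the extra bookkeeping you yourself anticipate.
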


\begin{proof}
First assume that for a fixed $\omega=(\omega_\frak{p})_\frak{p}\in\Omega^{\rm cts}_{{\bf A}_F/K}$ we have $\textrm{Res}_{F/K}(f\omega)=0$ for any $f\in F$. We need to show that $\omega\in \Omega^{\textrm{cts}}_{F/K}$.  It is equivalent to replace $\omega$ by $\omega'=f\omega$ with any $f\in F^\times$ and prove the conclusion for $\omega'$. Hence w.l.o.g. we may assume that $\omega_\frak{p}\in \Omega^{\textrm{sep}}_{{\mathcal O}_{F_\frak{p}}/K}$ for any $\frak{p}\neq \pi_KB$. For any such $\frak{p}$, which corresponds to an irreducible distinguished polynomial $P=P(t)\in{\mathcal O}_K[t]$, we may
fix an isomorphism $F_\frak{p}\cong k_\frak{p}((t_\frak{p}))$ where $k_\frak{p}\cong K[t]/(P(t))$ and $t_\frak{p}$ is a local parameter of $\frak{p}$. Here we may and do choose $t_\frak{p}=P(t)$. Then
\[
 \Omega^{\textrm{cts}}_{F_\frak{p}/K}=F_\frak{p}dt_\frak{p}=k_\frak{p}((t_\frak{p}))dt_\frak{p}.
\]
Denote $\pi_K B$ by $\frak{p}_0$. Then it is clear that $P(t)\in {\mathcal O}^\times_{F_{\frak{p}'}}$ for any $\frak{p}'\neq \frak{p}, \frak{p}_0$. 
The image of $t$ under the embedding $F\hookrightarrow k_\frak{p}((t_\frak{p}))$ is a root of the equation $P(t)=t_\frak{p}$, which a priori exists by Hensel's lemma. One can easily show that it is of the form
\[
t= \sum^\infty_{i=0} c_i t_\frak{p}^i,\quad c_i\in k_\frak{p},
\]
where $c_0\in k_\frak{p}$ is a root of $P(t)$. Note that $t\in {\mathcal O}_{F_{\frak{p}'}}$ for any $\frak{p}'\neq \frak{p}_0$.

If we write
\[
\omega_\frak{p}=\sum_{i\geq 0} a_{i,\frak{p}}t_\frak{p}^i dt_\frak{p},\quad a_{i,\frak{p}}\in k_\frak{p},
\]
then for any $n\geq 0$,
\[
\textrm{Res}_{F/K}(P^{-1} t^n\omega)=\textrm{Res}_\frak{p}(P^{-1}t^n\omega_\frak{p})+\textrm{Res}_{\frak{p}_0}(P^{-1}t^n\omega_{\frak{p}_0})=0,
\]
which implies that
\[
\textrm{Res}_\frak{p}(P^{-1}t^n\omega_\frak{p})=-\textrm{Res}_{\frak{p}_0}(P^{-1}t^n\omega_{\frak{p}_0}).
\]
By definition it is straightforward to check that
\[
\textrm{Res}_\frak{p}(P^{-1}t^n\omega_\frak{p})=\mathrm{Tr}_{k_\frak{p}/K}(c_0^n a_{0,\frak{p}}).
\]
Since $\{1, c_0, \cdots, c_0^{\deg P-1}\}$ form a basis of $k_\frak{p}$ over $K$, the last two equations imply that $\mathrm{Tr}_{k_\frak{p}/K}(\lambda a_{0,\frak{p}})$ is determined by
$\omega_{\frak{p}_0}$ for any $\lambda\in k_\frak{p}$. Using non-degeneracy of the pairing 
\[
k_\frak{p}\times k_\frak{p}\to K,\quad (a,b)\mapsto \mathrm{Tr}_{k_\frak{p}/K}(ab),
\]
 we see that $a_{0,\frak{p}}$ is uniquely determined by $\omega_{\frak{p}_0}$. By induction, similar arguments apply for $\textrm{Res}_{F/K}(P^{-i-1}t^n\omega)$ and imply that
 $a_{i,\frak{p}}$ is determined by $\omega_{\frak{p}_0}$ for any $i\geq 0$.

Therefore we have deduced that
$\omega_\frak{p}$ is determined by $\omega_{\frak{p}_0}$ for arbitrary $\frak{p}\neq \frak{p}_0$. In other words, if $\omega, \omega'$ both annihilate $F$ and
$\omega_{\frak{p}_0}=\omega_{\frak{p}_0}'$, then $\omega=\omega'$. Hence we are reduced to showing that $\omega_{\frak{p}_0}\in\Omega^{\textrm{cts}}_{F/K}$. Recall that
$\Omega^{\rm cts}_{F_{\frak{p}_0}/K}=K\{\{t\}\}dt$. If we write
\[
\omega_{\frak{p}_0}=\sum_{i\in\bf{Z}}a_{i,\frak{p}_0}t^i dt,\quad a_{i,\frak{p}_0}\in K,
\]
then similarly as above we have
\[
a_{-i,\frak{p}_0}=-\textrm{Res}_{\frak{p}_0}(t^{i-1}\omega_{\frak{p}_0})=\textrm{Res}_{(t)}(t^{i-1}\omega_{(t)})=0,\quad \forall i\geq 1.
\]
Hence we obtain
\[
\omega_{\frak{p}_0}=\sum_{i\geq 0}a_{i,\frak{p}_0}t^i dt\in \Omega^{\textrm{cts}}_{F/K}.
\]
This proves that $\omega\in \Omega^{\textrm{cts}}_{F/K}$.

In the same way one can show that $F$ is the annihilator of $\Omega^{\textrm{cts}}_{F/K}$.
\end{proof}

Let us now prove the general case.  At this time we shall only prove that $E$ is the annihilator of $\Omega^\mathrm{sep}_{E/K}$, and leave the proof of the dual statement as an exercise which is slightly more technical but can be essentially handled in a similar manner.

Assume that for a fixed $f=(f_\frak{P})_\frak{P}\in {\bf A}_E$ we have $\mathrm{Res}(f\omega)=0$ for any $\omega\in \Omega^\mathrm{cts}_{E/K}$. We have to show that $f\in E$. Consider the  natural trace maps ${\bf A}_E\to {\bf A}_F$ and $\Omega^{\rm cts}_{{\bf A}_E/K}\to \Omega^{\rm cts}_{{\bf A}_F/K}$, both of which will be denoted by  $\mathrm{Tr}_{E/F}$. By functoriality of the residues with respect to the trace maps, 
\[\mathrm{Res}_{F/K}(\mathrm{Tr}_{E/F}(f)\omega')=\mathrm{Res}_{F/K}(\mathrm{Tr}_{E/F}(f\omega'))=\mathrm{Res}_{E/K}(f\omega')=0
\]
for any $\omega'\in \Omega^\mathrm{cts}_{F/K}$. From the last lemma it follows that
$\mathrm{Tr}_{E/F}(f)\in F$. Replacing $f$ by $fg$ with $g\in E$, similarly we have 
$\mathrm{Tr}_{E/F}(fg)\in F$. Hence we are reduced to proving the following result, which is a consequence of several standard facts from algebraic number theory. It should be a fairly standard result but it does not seem to be widely available in literature, so we give a detailed proof for completeness.

\begin{lemma}
Let $A/B$ be a finite extension of two-dimensional normal complete local rings of characteristic zero and $E/F$ be the extension of their fractional fields. Assume that $f\in {\bf A}_E$ satisfies $\mathrm{Tr}_{E/F}(fg)\in F$ for any $g\in E$. Then $f\in E$.
\end{lemma}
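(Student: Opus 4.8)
The plan is to exploit the local-global principle for the restricted product together with the fact that, over a Dedekind-type base, the trace form on a finite extension of fields is non-degenerate, so that the ``trace lands in $F$'' hypothesis should force $f$ itself to be a global element. First I would reduce the adelic statement to a statement at each height one prime. Concretely, for a height one prime $\mathfrak{p}\lhd^1 B$ with the finitely many primes $\mathfrak{P}_1,\dots,\mathfrak{P}_r$ of $A$ lying over it, the module $E\otimes_F F_{\mathfrak{p}}$ decomposes as $\prod_i E_{\mathfrak{P}_i}$, and the global trace $\mathrm{Tr}_{E/F}$ becomes componentwise the sum $\sum_i \mathrm{Tr}_{E_{\mathfrak{P}_i}/F_{\mathfrak{p}}}$. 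The hypothesis $\mathrm{Tr}_{E/F}(fg)\in F$ for all $g\in E$, localized at $\mathfrak{p}$, says that $\mathrm{Tr}_{(E\otimes_F F_{\mathfrak p})/F_{\mathfrak p}}(f_{\mathfrak p}\cdot g)\in F_{\mathfrak p}$ for all $g\in E\otimes_F F_{\mathfrak p}$ (using that $E$ is dense in $E\otimes_F F_{\mathfrak p}$ and the trace pairing is continuous). Since $E\otimes_F F_{\mathfrak p}$ is a finite \emph{\'etale} $F_{\mathfrak p}$-algebra (characteristic zero, so separable), its trace form is a non-degenerate $F_{\mathfrak p}$-bilinear pairing; hence $f_{\mathfrak p}\in E\otimes_F F_{\mathfrak p}$, i.e.\ the local component of $f$ at each place of $F$ actually comes from $E$.

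Next I would upgrade this ``$f$ is $E$-rational at every place of $F$'' conclusion to ``$f\in E$''. Pick any $F$-basis $e_1,\dots,e_n$ of $E$ consisting of elements of $A$, and let $e_1^\vee,\dots,e_n^\vee$ be the dual basis with respect to $\mathrm{Tr}_{E/F}$ (these lie in $E$, and after scaling by a nonzero element of $B$ we may take them in $A$ as well, at the cost of inverting finitely many primes of $B$). Writing $f=\sum_j \lambda_j e_j$ with $\lambda_j\in {\bf A}_F$ a priori, the hypothesis applied with $g=e_i^\vee$ gives $\lambda_i=\mathrm{Tr}_{E/F}(f e_i^\vee)\in F$ for each $i$; since each $\lambda_i$ lies in $F\cap {\bf A}_F = F$, we conclude $f=\sum_i \lambda_i e_i\in E$. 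The two computations of the localized trace must be checked to agree — that $\mathrm{Tr}_{E/F}$ on the adelic level is compatible with the place-by-place decomposition — but this is exactly the standard base-change compatibility of trace for the finite flat ring map $B_{\mathfrak p}\to A\otimes_B B_{\mathfrak p}$, which is where functoriality of residues in the previous section already implicitly uses it.

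The main obstacle I anticipate is purely bookkeeping rather than conceptual: one must be careful that the dual basis $\{e_i^\vee\}$ can be chosen to have \emph{bounded denominators}, so that $f\mapsto (\mathrm{Tr}_{E/F}(f e_i^\vee))_i$ genuinely maps the restricted product ${\bf A}_E$ into the restricted product ${\bf A}_F$ (not merely into the full product), and conversely so that $\sum_i \lambda_i e_i$ with $\lambda_i\in F$ and $f\in{\bf A}_E$ forces the integrality condition $f_{\mathfrak P}\in \widehat{A_{\mathfrak P}}$ for almost all $\mathfrak P$. This is handled by the standard fact that $A$ is finite over $B$ and that the different $\mathfrak{d}_{A/B}$ is supported on finitely many height one primes, so that $e_i^\vee\in \mathfrak{d}_{A/B}^{-1}$ is integral at all but finitely many primes; at the remaining finitely many bad primes one argues directly with the explicit structure of two-dimensional local fields. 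Modulo this care with integrality at the finitely many ramified curves, the argument is just non-degeneracy of the trace form applied uniformly over all places, combined with $F\cap{\bf A}_F=F$.
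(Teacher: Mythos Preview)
Your argument is correct, and in fact cleaner than the paper's. The heart of your proof is the dual basis trick in your second paragraph: setting $\lambda_i=\mathrm{Tr}_{E/F}(fe_i^\vee)\in F$ by hypothesis and verifying place by place that $(f_\frak{P})_{\frak{P}|\frak{p}}=\sum_i\lambda_i e_i$ in $E\otimes_F F_\frak{p}$, using that $\{e_i^\vee\}$ remains the trace-dual basis after base change to $F_\frak{p}$. Two remarks: your first paragraph is actually vacuous, since $(f_\frak{P})_{\frak{P}|\frak{p}}$ already lies in $\prod_{\frak{P}|\frak{p}}E_\frak{P}=E\otimes_F F_\frak{p}$ tautologically; and the integrality worries in your third paragraph are unnecessary, because you never need $\lambda_i\in{\bf A}_F$ as an intermediate step---the hypothesis hands you $\lambda_i\in F$ directly.

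The paper takes a considerably longer route. It first reduces to the case $E/F$ Galois by passing to a normal closure; then, fixing a basis $\{\beta_j\}$ of $E/F$ and the Galois embeddings $\{\sigma_i\}$, it inverts the matrix $(\sigma_i(\beta_j))$ to deduce that each local component $f_\frak{P}$ lies in $E$; then, with $\frak{p}$ fixed, it shows the $f_\frak{P}$ for $\frak{P}|\frak{p}$ all coincide via an injectivity argument that invokes the approximation theorem for discrete valuations; finally it uses non-degeneracy of the global trace pairing to conclude that the common values $f_\frak{p}$ are independent of $\frak{p}$. Your dual basis argument bypasses the Galois reduction, the discriminant computation, and the approximation theorem entirely; what the paper's approach buys is perhaps a more explicit view of why each individual $f_\frak{P}$ is rational, but at the cost of substantially more bookkeeping.
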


\begin{proof}
Take a finite extension $A'/A$ of two-dimensional normal complete local rings, and let $E'$ be the fractional field of $A'$. Assume that the lemma holds for $A'/B$.
Take $f\in {\bf A}_E$ such that $\mathrm{Tr}_{E/F}(fg)\in F$ for any $g\in E$. Then by functoriality of the trace map we have
\[
\textrm{Tr}_{E'/F}(fg')=\textrm{Tr}_{E/F}\textrm{Tr}_{E'/E}(fg')=\textrm{Tr}_{E/F}(f\textrm{Tr}_{E'/E}(g'))\in F
\]
for any $g'\in E'$. By our assumption this implies that $f\in E'$, hence $f\in{\bf A}_E\cap E'=E$. Hence the lemma holds for $A/B$ as well. Therefore by passing to a finite extension we may and do assume that $E$ is Galois over $F$.

We proceed by several steps, starting from the ``rationality" of $f_\frak{P}$ for each $\frak{P}\lhd^1 A$. That is, we first show that $f_\frak{P}\in E$. Let $\{\beta_1,\ldots, \beta_n\}$ be a basis of $E$ over $F$, where $n=[E:F]$.
Fix $\frak{p}\lhd^1 B$ and let $\{\sigma_1,\ldots, \sigma_n\}$ be the multiset union of $G_\frak{P}:=\mathrm{Gal}(E_\frak{P}/F_\frak{p})$ for all $\frak{P}|\frak{p}$. Recall that the $\frak{p}$-component of $\mathrm{Tr}_{E/F}(f\beta_j)$ is 
\[
\sum_{\frak{P}|\frak{p}}\mathrm{Tr}_{E_\frak{P}/F_\frak{p}}(f_\frak{P}\beta_j)=\sum_{\frak{P}|\frak{p}}\sum_{\sigma\in G_\frak{P}}\sigma (f_\frak{P})\sigma(\beta_j)=\sum^n_{i=1}\sigma_i(f_{\frak{P}_i})\sigma_i(\beta_j).
\]
Here $\{\frak{P}_1,\ldots, \frak{P}_n\}$ is the multiset where all the $\frak{P}$'s, $\frak{P}|\frak{p}$, appear with equal multiplicities such that $\sigma_i \in G_{\frak{P}_i}$. The matrix $D=(\sigma_i(\beta_j))_{1\leq i,j\leq n}$ is nonsingular and rational over $E$. Indeed, from $\mathrm{Tr}_{E/F}=\sum_{\frak{P}|\frak{p}}\mathrm{Tr}_{E_\frak{P}/F_\frak{p}}$ one verifies that
$D^T D=(\mathrm{Tr}_{E/F}(\beta_i\beta_j))$,
whose determinant is the discriminant $D(\beta_1,\ldots, \beta_n)\neq 0$. Now by assumption $\mathrm{Tr}_{E/F}(f\beta_j)\in F$ for all $j$, so $\sigma_i(f_{\frak{P}_i})\in E$ for all $i$. In particular $f_\frak{P}\in E$ for all $\frak{P}|\frak{p}$. 

It remains to prove that $f_\frak{P}$'s are equal for all $\frak{P}\lhd^1 A$. It turns out that we only need to prove a weaker result that $f_\frak{P}$'s are equal for all $\frak{P}|\frak{p}$ with $\frak{p}$ fixed. In fact, assume this is true and let $f_\frak{p}\in E$ be the common value of $f_\frak{P}$'s with $\frak{P}|\frak{p}$. Then by assumption $\mathrm{Tr}_{E/F}(fg)=(\mathrm{Tr}_{E/F}(f_\frak{p}g))_\frak{p}\in F$ for any $g\in E$, which implies that $\mathrm{Tr}_{E/F}(f_\frak{p}g)$'s are equal for all $\frak{p}$. The $f_\frak{p}$'s must be also equal for all $\frak{p}$ due to the non-degeneracy of the paring 
\[
E\times E\to F,\quad (a,b)\mapsto \mathrm{Tr}_{E/F}(ab).
\]
This proves that $f$ lies in $E$.

Finally let us show that $f_\frak{P}$'s are equal for all $\frak{P}|\frak{p}$ with $\frak{p}$ fixed.  Let $l$ be the number of these $\frak{P}$'s. Consider the $F$-linear map
\[
T: E^l=\bigoplus_{\frak{P}|\frak{p}}E\to E^n, \quad (x_\frak{P})_\frak{P}\mapsto \big(\sum_{\frak{P}|\frak{p}}\mathrm{Tr}_{E_\frak{P}/F_\frak{p}}(x_\frak{P}\beta_i)\big)_{i=1,\ldots, n}.
\]
Let $E_d$  be the copy of $E$ diagonally embedded into $E^l$. Then apparently $E_d\subset T^{-1}(F^n)$, and what we need to prove is equivalent to that $E_d=T^{-1}(F^n)$. Comparing the $F$-dimensions, it suffices to show that $T$ 
is injective.
Assume that $T((x_\frak{P})_\frak{P})= 0$ but $x_{\frak{P}'}\neq 0$ for some $\frak{P}'|\frak{p}$. By the approximation theorem for discrete
valuations, there exists $g\in E$ such that \[
x_{\frak{P}'}g\equiv \frac{1}{[E_\frak{P}: F_\frak{p}] }\mod \frak{P}'
\] and $x_\frak{P}g\in \frak{P}$ for $\frak{P}\neq \frak{P}'$. It follows that
\[
\sum_{\frak{P}|\frak{p}}\mathrm{Tr}_{E_\frak{P}/F_\frak{p}}(x_\frak{P}g)\equiv 1 \mod \frak{p}\widehat{B_\frak{p}}.
\]
In particular, above is nonzero, which leads to a contradiction. This proves that $T$ is injective hence finishes the proof of the lemma.
\end{proof}

\end{document}